\newtheorem{thm}{Theorem}[section]
\newtheorem{lem}[thm]{Lemma}
\newtheorem{cor}[thm]{Corollary}
\newtheorem{exa}[thm]{Example}
\begin{document}

\title {A characterization of normal subgroups via n-closed sets}
\author{Ayman Badawi}
\address{American Univ of Sharjah, Dept of Math, Box 26666, Sharjah, UAE}
\email{abadawi@aus.edu} \subjclass[2000]{Primary 20E07}
\keywords{subgroup, normal subgroup, left coset, n-closed set}
\begin{abstract}
Let $(G, *)$ be a semigroup, $D  \subseteq G$, and $n \geq 2$ be an
integer. We say that {\it $(D, *)$ is an $n$-closed subset of  $G$}
if $a_1* \cdots *a_n \in D$ for every $a_1, ... , a_n \in D$. Hence
every closed set  is a 2-closed set. The concept of $n$-closed sets
arise  in so many natural examples. For example, let $D$ be the set
of all odd integers, then $(D, +)$ is a 3-closed subset of
$(\mathbb{Z}, +)$ that is not a 2-closed subset of $(\mathbb{Z},
+)$. If  $K = \{1,4, 7, 10, ... \}$ , then $(K, +)$ is a $4$-closed
subset of $(\mathbb{Z}, +)$ that is not an $n$-closed subset of
$(\mathbb{Z}, +)$ for $n = 2, 3$. In this paper, we show that if
$(H, *)$ is a subgroup of a group $(G, *)$ such that $[H: G] = n <
\infty$, then $H$ is a normal subgroup of $G$ if and only if every
left coset of $H$ is an $n+1$-closed subset of $G$.
\end{abstract}

\maketitle{}

\section{intruduction}
In this paper, we introduce the concept of $n$-closed sets for some
integer $n \geq 2$. Let $(G, *)$ be a semigroup, $D  \subseteq G$,
and $n \geq 2$ be an  integer. We say that {\it $(D, *)$ is an
$n$-closed subset of  $G$} if $a_1* \cdots *a_n \in D$ for every
$a_1, ... , a_n \in D$. Hence every closed set  is a 2-closed set.
The concept of $n$-closed sets arise  in so many natural examples.
For example, let $D$ be the set of all odd integers, then $(D, +)$
is a 3-closed subset of $(\mathbb{Z}, +)$ that is not a 2-closed
subset of $(\mathbb{Z}, +)$. If  $K = \{1,4, 7, 10, ... \}$ , then
$(K, +)$ is a $4$-closed subset of $(\mathbb{Z}, +)$ that is not an
$n$-closed subset of $(\mathbb{Z}, +)$ for $n = 2, 3$. In the second
section of this paper paper, many basic properties of $n$-closed
sets are studied. For example, we show that if a finite set $D$ of a
group $(G, *)$ is an $n$-closed subset of $G$, then $D$ is a left
coset of a subgroup of $G$. In the third section, we give a
characterization of normal subgroups via $n$-closed sets. For
example, we show that if $(H, *)$ is a subgroup of a group $(G, *)$
such that $[H: G] = n < \infty$, then $H$ is a normal subgroup of
$G$ if and only if every left coset of $H$ is an $n+1$-closed subset
of $G$. Though we feel that the proofs of many results in this short
paper are elementary, we feel that  the whole idea is original and
it has not been considered in the literature.

Let $(G, *)$ be a group. If $H$ is a subset of $G$ and $H \not = G$,
then we write $H \subset G$. If $H$ is a subgroup of $G$, then $[H :
G]$ denotes the number of all distinct left cosets of $H$. If $a \in
G$ and $n \geq 1$ is an integer, then $a^n = a* \cdots *a$ (n
times),  $(a^n)^{-1}$ is the inverse of $a^n$ in $G$, and $\mid a
\mid$ denotes the order of $a$ in $G$. Let $D$ be a subset of $G$,
and $d_1, ... , d_k \in D$. Then $d_1* \cdots *d_k*D = \{d_1* \cdots
*d_k*d \mid  d\in D\}$. As usual, $\mathbb{R}, \mathbb{Q}$, and
$\mathbb{Z}$  will denote real numbers, rational numbers, and
integers,  respectively.

\section{Basic properties of $n$-closed sets}

\begin{thm}
\label{t2.1} Let $D$ be a finite subset of a group $(G, *)$. Suppose
that $D$ is not a $2$-closed subset (subgroup) of $G$, and for some
integer $n \geq 3$, $(D, *)$ is an $n$-closed subset of $(G, *)$.
Then

{\bf (1)}. $(d_1* \cdots *d_{n-2}*D, *)$ is a subgroup of $G$ for
every $d_1, ... , d_{n-2} \in D$. In particular, $(d^{n-2}*D, *)$ is
a subgroup of $G$   for every $d \in D$.

{\bf (2)}. Let $b, d_1, ... , d_{n-2} \in D$. Then $D = b*d_1*
\cdots *d_{n-2}*D$ (i.e., $D$ is a left coset of the subgroup $(d_1*
\cdots *d_{n-2}*D, *)$ of $(G, *)$). In particular, $b*d^{n-2}*D =
D$ for every $b, d \in D$.

{\bf (3)}. $d^{n-2}*D = b^{n-2}*D = d_1* \cdots *d_{n-2}*D$ for
every $d, b, b_1, ... , b_{n-2} \in D$.

\end{thm}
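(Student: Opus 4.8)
The plan is to use the only structural tool at hand---that any product of exactly $n$ elements of $D$ lies in $D$---together with the elementary fact that a nonempty finite subset of a group which is closed under the operation is a subgroup (for $x$ in such a subset, two powers of $x$ coincide, which forces the identity and the inverse of $x$ into the subset). Throughout I would fix $d_1,\dots,d_{n-2}\in D$ (legitimate since $\emptyset$ is vacuously $2$-closed, so the hypothesis forces $D\ne\emptyset$; also $n-2\ge 1$), and write $c=d_1*\cdots*d_{n-2}$, so that the set under consideration in (1) and (2) is $H:=c*D$, a nonempty finite subset of $G$.

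For part (1) I would verify $H*H\subseteq H$. Given $x=c*a$ and $y=c*b$ with $a,b\in D$, associativity gives $x*y=c*(a*d_1*\cdots*d_{n-2}*b)$, and the element inside the parentheses is a product of $1+(n-2)+1=n$ elements of $D$, hence lies in $D$ by $n$-closedness; therefore $x*y\in c*D=H$. By the finite-subgroup fact $H$ is a subgroup of $G$, and the ``in particular'' assertion is the case $d_1=\cdots=d_{n-2}=d$.

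For part (2) I would fix in addition $b\in D$. For every $d\in D$ the element $b*c*d=b*d_1*\cdots*d_{n-2}*d$ is once more a product of $n$ elements of $D$, so $(b*c)*D\subseteq D$. Since left translation by $b*c$ is a bijection of $G$, we have $|(b*c)*D|=|D|<\infty$, and a finite set cannot properly contain a set of the same cardinality, so $(b*c)*D=D$; rewriting, $D=b*(c*D)=b*H$, which exhibits $D$ as a left coset of the subgroup $H$ from part (1). The ``in particular'' clause is again the constant case.

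Finally, for part (3) I would observe that part (2) was established for a \emph{fixed} $b\in D$ and an \emph{arbitrary} tuple $(d_1,\dots,d_{n-2})\in D^{\,n-2}$: choosing two such tuples $(d_1,\dots,d_{n-2})$ and $(b_1,\dots,b_{n-2})$ yields $b*(d_1*\cdots*d_{n-2}*D)=D=b*(b_1*\cdots*b_{n-2}*D)$, and cancelling $b$ gives $d_1*\cdots*d_{n-2}*D=b_1*\cdots*b_{n-2}*D$; taking the tuples constant specializes this to $d^{n-2}*D=b^{n-2}*D$ for all $d,b\in D$. I do not anticipate any genuine obstacle: the whole argument is a short sequence of closure computations, and the only things to watch are that $D$ is nonempty and that every product invoked has exactly $n$ factors so that $n$-closedness truly applies.
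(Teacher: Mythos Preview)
Your proof is correct and follows essentially the same route as the paper: you show $c*D$ is $2$-closed by rewriting $(c*a)*(c*b)=c*(a*d_1*\cdots*d_{n-2}*b)$, conclude it is a subgroup by finiteness, obtain $b*c*D=D$ from $n$-closedness plus injectivity of left translation on a finite set, and deduce (3) by cancelling a common $b$. Your explicit check that $D\ne\emptyset$ is a nice extra bit of care not spelled out in the paper.
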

\begin{proof}  Suppose that $D$ is an $n$-closed subset of $G$ for some integer $n \geq 3$.

{\bf (1)}. Let $d_1, ... , d_{n-2} \in D$. Since $D$ is a finite
subset of the group $(G, *)$, we only need to show that $(d_1*
\cdots *d_{n-2}*D, *)$ is a $2$-closed subset of $G$. Let $a, b \in
d_1* \cdots *d_{n-2}*D$. Hence $a = d_1* \cdots *d_{n-2}*h_1$ and $b
= d_1* \cdots *d_{n-2}*h_2$ for some $h_1, h_2 \in D$. Since $D$ is
an $n$-closed subset of $G$, $h_1*d_1* \cdots *d_{n-2}*h_2 = c \in
D$, and thus $a*b = d_1* \cdots *d_{n-2}*h_1*d_1* \cdots
*d_{n-2}*h_2 = d_1* \cdots *d_{n-2}*c  \in d_1* \cdots *d_{n-2}*D$.

{\bf (2)}.  Let $b, d_1, ... , d_{n-2} \in D$. Since $D$ is an
$n$-closed subset of $G$, $b*d_1* \cdots *d_{n-2}*a \in D$ for every
$a \in D$. Since $D$ is a finite subset of $G$ and $b*d_1* \cdots
*d_{n-2}*a_1 = b*d_1* \cdots *d_{n-2}*a_2 $ for some $a_1, a_2 \in
D$ if and only if $a_1 = a_2$, we conclude that $D = b*d_1* \cdots
*d_{n-2}*D$.

{\bf (3)}. $d, b, b_1, ... , b_{n-2} \in D$. Since $D = b*d^{n-2}*D
= b*b^{n-2}*D = b*d_1* \cdots *d_{n-2}*D$ by (2), we conclude that
$d^{n-2}*D = b^{n-2}*D = d_1* \cdots *d_{n-2}*D$  for every $d, b,
b_1, ... , b_{n-2} \in D$.
\end{proof}

\begin{cor} \label{c2.01}
Let $(G, *)$ be a finite group and $D$ be a subset of $G$. Suppose
that $D$ is not a $2$-closed subset (subgroup) of $G$, and for some
integer $n \geq 3$, $(D, *)$ is an $n$-closed subset of $(G, *)$.
Then $H = d^{n-2}*D$ is a subgroup of $G$  for every $d \in D$ and
$D$ is a left coset of $H$.
\end{cor}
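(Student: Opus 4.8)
The plan is to obtain this statement as an immediate specialization of Theorem \ref{t2.1}: the only real content is the observation that the hypothesis ``$(G,*)$ is a finite group'' forces $D$ to be a \emph{finite} subset of $G$, so all three conclusions of Theorem \ref{t2.1} become available once we have checked that $D$ is nonempty.

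First I would record that $D \neq \emptyset$. Indeed, the empty set is vacuously an $m$-closed subset of $G$ for every integer $m \geq 2$, in particular a $2$-closed subset; since $D$ is assumed \emph{not} to be a $2$-closed subset, we must have $D \neq \emptyset$, so we may fix some $d \in D$ (and the argument that follows works for every $d \in D$). Now I would apply Theorem \ref{t2.1}(1) with $d_1 = \cdots = d_{n-2} = d$: its ``in particular'' clause states exactly that $(d^{n-2}*D,*) = (H,*)$ is a subgroup of $G$. Finally, applying the ``in particular'' clause of Theorem \ref{t2.1}(2) with $b = d$ gives $d*d^{n-2}*D = D$, that is, $d*H = D$, so $D$ is the left coset $dH$ of $H$.

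I do not anticipate any genuine difficulty here; the whole corollary is packaged inside Theorem \ref{t2.1}, and the only points deserving explicit mention are the passage from ``$G$ finite'' to ``$D$ finite'' and the harmless remark that $D \neq \emptyset$ (needed so that an element $d$ as in the statement exists and so that $D$ is an honest, nonempty coset rather than the empty set).
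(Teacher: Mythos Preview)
Your proposal is correct and matches the paper's approach: the paper states Corollary~\ref{c2.01} immediately after Theorem~\ref{t2.1} with no separate proof, treating it as a direct specialization (since $G$ finite forces $D$ finite). Your added remarks that $D\neq\emptyset$ and the explicit choice $b=d$ in Theorem~\ref{t2.1}(2) are just careful bookkeeping and do not depart from the intended argument.
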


The following example shows that the hypothesis that $D$ is finite in Theorem \ref{t2.1} is crucial.

\begin{exa}
Let $G = (\mathbb{Z}, +)$, and $D = \{1, 3, 5, ..., \}$ be the set
of all positive odd numbers of $\mathbb{Z}$. Then $D$ is a 3-closed
subset of $\mathbb{Z}$, but $(a^{3-2} + D = a + D, +) $ is not a
subgroup of $G$  for every $a \in D$.
\end{exa}

In view of the proof of Theorem \ref{t2.1}, we have the following.

\begin{cor}
\label{c2.1} Let $(G, *)$ be a  a semigroup and $(D, *)$ be a an
$n$-closed subset of $(G, *)$ for some integer $n \geq3$. Then
$(d_1* \cdots *d_{n-2}*D), *)$ is a $2$-closed subset of $(G, *)$
for every $d_1, ... , d_{n-2} \in D$. In particular, $(d^{n-2}*D,
*)$ is a $2$-closed subset of $(G, *)$ for every $d \in D$ .
\end{cor}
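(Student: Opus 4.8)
The plan is to extract exactly the computation from part (1) of Theorem~\ref{t2.1}, observing that it never used finiteness of $D$ nor the existence of inverses in $G$. Write $w = d_1 * \cdots * d_{n-2}$ for the fixed word of length $n-2$ in elements of $D$, and set $E = w * D$. To verify that $(E, *)$ is a $2$-closed subset of $G$, I would take two arbitrary elements $a, b \in E$ and show that $a * b \in E$.

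First I would express $a$ and $b$ in the form $a = w * h_1$ and $b = w * h_2$ for some $h_1, h_2 \in D$, which is possible by the definition of $E$. Next, using only associativity of $*$ in the semigroup $G$, I would rewrite the product as
\[
a * b = (w * h_1) * (w * h_2) = w * \bigl( h_1 * d_1 * \cdots * d_{n-2} * h_2 \bigr).
\]
The key observation is that the bracketed factor $h_1 * d_1 * \cdots * d_{n-2} * h_2$ is a product of exactly $1 + (n-2) + 1 = n$ elements of $D$. Since $(D, *)$ is an $n$-closed subset of $G$, this product lies in $D$; calling it $c$, we get $a * b = w * c \in w * D = E$, as required.

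The only point that needs care is the element count: the word $w$ contributes $n-2$ factors and the two elements $h_1, h_2 \in D$ supply the remaining two, giving precisely $n$ factors drawn from $D$ — exactly the number for which $n$-closedness of $D$ applies. No finiteness, cancellation, or inverse is invoked, so the argument is valid in an arbitrary semigroup; this is also why the conclusion here is merely $2$-closedness rather than the full subgroup property of Theorem~\ref{t2.1}. The ``in particular'' statement follows at once by specializing $d_1 = \cdots = d_{n-2} = d$, so that $w = d^{n-2}$ and $E = d^{n-2} * D$.
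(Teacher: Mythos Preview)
Your argument is correct and is exactly the approach the paper intends: the corollary is stated ``in view of the proof of Theorem~\ref{t2.1},'' and you have simply reproduced the closure computation from part~(1) while noting that neither finiteness of $D$ nor the group structure of $G$ was used. The remark about the element count and the specialization to $d_1 = \cdots = d_{n-2} = d$ are also on target.
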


\begin{thm}
\label{t2.2} Let $(G, *)$ be a group, $H \subset G$  be a subgroup
of $G$, and $L = a*H$ for some $a \in G \setminus H$. Suppose that
$(L, *)$ is an $n$-closed subset of $G$ for some integer $n \geq 2$,
and let $k \geq 2 $ be the least integer such that $(L, *)$ is a
$k$-closed subset of $G$. Then:

{\bf (1)}. $a^{n-1} \in H$, and hence  $n \geq 3$.

{\bf (2)}. $a*H = H*a = L$, and hence  $b*H = H*b = L$ for every $b \in L$.

{\bf (3)}. $a^{n-2}*L =  L*a^{n-2} = H$, and hence $b_1* \cdots
*b_{n-2}*L = L *b_1* \cdots *b_{n-2} =  H$ for every $b_1, ..., b_{n
- 2} \in L$.

{\bf (4)}  $a^{m} \in H$ for some integer $m > 0$ if and only if
$(k-1) \mid m$, and hence for every $d \in L$, $d^m \in H$ for some
positive integer $m$ if and only if $(k-1) \mid m$.

{\bf (5)}. $(L, *)$ is an $m$-closed subset of $G$  for some
positive integer $m$ if and only if $m = c(k-1) + 1$ for some
integer $c \geq 1$.
\end{thm}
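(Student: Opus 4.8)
The plan is to prove the five parts in order, using \textbf{(1)} and \textbf{(2)} as the engine and then deriving \textbf{(3)}--\textbf{(5)} by systematically moving powers of $a$ to the front of products of elements of $L$. Part \textbf{(1)} is quick: since $a = a*e \in a*H = L$, the $n$-closedness of $L$ applied to the constant tuple $(a,\dots,a)$ gives $a^n \in L = a*H$, hence $a^{n-1}\in H$; if $n=2$ this would put $a$ into $H$, contradicting $a\in G\setminus H$, so $n\geq 3$.

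Part \textbf{(2)} is the heart of the argument. For $h\in H$ both $a$ and $a*h$ lie in $L$, so applying $n$-closedness to the $n$-tuple $(a,\ a*h,\ a,\dots,a)$ with $n-2$ trailing copies of $a$ gives $a*(a*h)*a^{n-2} = a^2*h*a^{n-2} \in L = a*H$, that is, $a*h*a^{n-2}\in H$ for every $h\in H$. Setting $h=e$ recovers $a^{n-1}\in H$, and dividing, $a*h*a^{-1} = (a*h*a^{n-2})*(a^{n-1})^{-1}\in H$ for all $h\in H$, so $a*H*a^{-1}\subseteq H$. Since this inclusion need not be an equality when $H$ is infinite, I would conjugate it repeatedly, $H \subseteq a^{-1}*H*a \subseteq a^{-2}*H*a^2 \subseteq \cdots \subseteq a^{-(n-1)}*H*a^{n-1} = H$ --- the last equality because $a^{n-1}\in H$ --- forcing all inclusions to be equalities and hence $a*H*a^{-1} = H$, i.e. $a*H = H*a = L$. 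The ``hence'' clause is then immediate: for $b = a*h_0\in L$, $b*H = a*h_0*H = a*H = L$ and $H*b = H*a*h_0 = a*H*h_0 = a*H = L$.

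For \textbf{(3)}, a direct computation gives $a^{n-2}*L = a^{n-2}*a*H = a^{n-1}*H = H$ by (1), and $L*a^{n-2} = a*H*a^{n-2} = a*a^{n-2}*H = H$ using that $a$, hence $a^{n-2}$, normalizes $H$ by (2); for the general statement, writing $b_i = a*h_i\in L$ and pushing all the $a$'s leftward via $H*a = a*H$ yields $b_1*\cdots*b_{n-2} = a^{n-2}*h$ for some $h\in H$, and since $h*L = L$ (as $a^{-1}*h*a\in H$ gives $h*a\in a*H$, so $h*a*H = a*H$) we get $b_1*\cdots*b_{n-2}*L = a^{n-2}*L = H$, similarly for $L*b_1*\cdots*b_{n-2}$. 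Both \textbf{(4)} and \textbf{(5)} then rest on one equivalence: for $j\geq 2$, $(L,*)$ is $j$-closed $\iff a^{j-1}\in H$. The forward direction is (1) with $n=j$; for the converse, given $\ell_1,\dots,\ell_j\in L$ with $\ell_i = a*h_i$, move all the $a$'s to the front via $H*a = a*H$ to write $\ell_1*\cdots*\ell_j = a^j*\tilde h$ with $\tilde h\in H$, and note $a^{j-1}\in H$ gives $a^j = a^{j-1}*a\in H*a = a*H = L$, so $\ell_1*\cdots*\ell_j\in L*H = L$. Granting the equivalence, $T = \{m\in\mathbb{Z} : a^m\in H\}$ is a subgroup of $(\mathbb{Z},+)$ containing the positive integer $k-1$ (by (1) with $n=k$), hence $T = t\mathbb{Z}$ with $t = \min(T\cap\mathbb{Z}_{>0})\geq 1$; and because $k$ is the least $j\geq 2$ with $L$ $j$-closed, it is the least $j\geq 2$ with $a^{j-1}\in H$, i.e. with $t\mid j-1$, which forces $k-1 = t$. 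Thus $a^m\in H \iff (k-1)\mid m$, and for $d = a*h\in L$ the same trick gives $d^m\in a^m*H$, whence $d^m\in H \iff a^m\in H \iff (k-1)\mid m$; that is \textbf{(4)}. Finally \textbf{(5)} is immediate: in the range $m\geq 2$ where ``$m$-closed'' is defined, $L$ is $m$-closed $\iff a^{m-1}\in H \iff (k-1)\mid (m-1) \iff m = c(k-1)+1$ for some integer $c\geq 1$, the constraint $c\geq 1$ being automatic since $m-1\geq 1$ and $k-1\geq 1$.

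I expect the main obstacle to be the first half of \textbf{(2)}: one must hit on the productive $n$-tuple and then notice that combining $a*h*a^{n-2}\in H$ with $a^{n-1}\in H$ collapses to honest conjugation $a*h*a^{-1}\in H$. Once $a$ normalizes $H$, every product of elements of $L$ reduces to a power of $a$ times an element of $H$, and \textbf{(3)}--\textbf{(5)} become bookkeeping; the one trap to stay alert to is that $a*H*a^{-1}\subseteq H$ by itself does not give $a*H*a^{-1} = H$ for infinite $H$, which is why the iterated-conjugation step is needed.
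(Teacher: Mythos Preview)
Your argument is correct and follows the same overall plan as the paper: part (1) drops out of $a^n\in L$; part (2) is obtained by feeding a well-chosen $n$-tuple into the $n$-closedness hypothesis; and (3)--(5) then reduce to bookkeeping once $a*H=H*a$ is in hand.

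The execution of (2) is where you diverge. The paper applies $n$-closedness to the tuple $\bigl(a*h,\ a*h*(a^{n-1})^{-1},\ a,\dots,a\bigr)$ and, after some cancellation, arrives at $a*h=h_1*a$ for some $h_1\in H$; you instead use the simpler tuple $(a,\ a*h,\ a,\dots,a)$ to get $a*h*a^{n-2}\in H$, divide by $a^{n-1}\in H$ to obtain $a*h*a^{-1}\in H$, and then upgrade $aHa^{-1}\subseteq H$ to equality via the telescoping chain $H\subseteq a^{-1}Ha\subseteq\cdots\subseteq a^{-(n-1)}Ha^{n-1}=H$. Both routes yield only a one-sided inclusion at first, and your iterated-conjugation step is exactly what is needed to close it in the infinite case --- a point the paper's proof passes over without comment. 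So your version is both a little cleaner in its choice of tuple and more careful about the inclusion-versus-equality issue you flagged.

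For (4) and (5) you isolate the equivalence ``$L$ is $j$-closed $\iff a^{j-1}\in H$'' (this is precisely the content of the paper's Corollary~\ref{c2.2}) and then read everything off from the cyclic subgroup $T=\{m\in\mathbb{Z}:a^m\in H\}$ of $(\mathbb{Z},+)$; the paper argues the same facts by explicit division with remainder, so the substance is identical.
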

\begin{proof}

{\bf (1)}. Since $L$ is an $n$-closed subset of $G$ and $a \in L$,
$a^n = a*h \in L$ for some $h \in H$, and thus $a^{n-1} = h  \in H$.
Since $a^{n-1} \in H$ and $a \in G \setminus H$, we have $n \geq 3$.

{\bf (2)}. Let $h \in H$. We show that $a*h = h_1*a$ for some $h_1
\in H$. Since $a^{n-1} \in H$ by (1),  $h_2 = (a^{n-1})^{-1} \in H$.
Since $L$ is $n$-closed, $(a*h)*(a*h*h_2)*a^{n-2} = a*h_3 \in L$ for
some $h_3 \in H$.  Thus $(a*h*h_2)*a^{n-2} = h^{-1}*h_3$, and hence
$(a*h*h_2)*a^{n-1} = h^{-1}*h_3*a$. Since $h_2 = (a^{n-1})^{-1}$, we
have $h^{-1}*h_3*a = (a*h*h_2)*a^{n-1} = a*h$. Since $h_1 =
h^{-1}*h_3 \in H$, $h_1*a =a*h$. Thus $a*H = H*a$. Let $b \in L$. We
show that $b*H = H*b = L$ for every $b \in L$. Since $b \in L$, $b =
a*h$ for some $h \in H$. Since $a*H = H*a$, $a*h = h_1*a$ for some
$h_1 \in H$. Thus $L = (a*h)*H = a*H = H*a = H*(h_1*a) = H*(a*h)$.

{\bf (3)}. Since $a^{n-1} \in H$ by (1),  we have $a^{n-2}*L =
a^{n-1}*H = H$. Since $a*H = H*a$ by (2), we have  $ H = a^{n-2}*L =
a^{n-1}*H = H*a^{n-1} = (H*a)*a^{n-2} =  L*a^{n-2}$. Let $b_1, ...,
b_{n-2} \in L$. Since $a*H = H*a$, we have $b_1* \cdots *b_{n-2} =
a^{n-2}*h$ for some $h \in H$. Thus $b_1* \cdots *b_{n-2}*L =
a^{n-1}*H = H*a^{n-1} = H*b_1 \cdots *b_{n-2} = H$.

{\bf (4)}. Suppose that $(k-1) \mid m$ for some positive integer
$m$. Since $a^{k-1} \in H$ by (1), we have $a^m \in H$.  Conversely,
suppose that $a^m \in H$ for some integer $m > 0$. Then $m = b(k -1
) + r$ for some integers \ $b, r \geq 0$ such that  $0 \leq r <
(k-1)$. We show that $r = 0$. Hence $a^m = a^{b(k-1) + r} =
a^{b(k-1)}*a^r \in H$. Since $a^{k-1} \in H$ and $a^{b(k-1)}*a^r \in
H$, we have $a^r \in H$. Let $d_1, ... , d_{r+1} \in L = a*H$. Since
$a*H = H*a$ by (2) and $a^r \in H$, there is an $h \in H$ such that
$d_1* \cdots *d_{r+1} = a^{r+1}*h = a*a^r*h \in a*H = L$. Thus $L$
is an $r+1$-closed subset of $G$, which is a contradiction since $r
+1 \leq (k-1)$ and $m \not = 0$. Hence $r = 0$ and $b \geq 1$.

{\bf (5)}.  Suppose that $L$ is an $m$-closed subset of $G$ for some
positive integer $m$.  Then $a^{m-1} \in H$ by (1). Hence $m -1 =
c(k-1)$ for some integer $c \geq 1$ by (4), and thus $m = c(k-1) +
1$. Conversely, suppose that $m = c(k-1) + 1$ for some integer $c
\geq 1$. Let $d_1, ... , d_m \in L$. Since $a*H = H*a$ by (2) and
$a^{m-1} \in H$ by (1), there is an $h \in H$ such that  $d_1*
\cdots *d_{m} = a^{m}*h = a*a^{m-1}*h \in a*H = L$. Thus $L$ is an
$m$-closed subset of $G$.

\end{proof}

In light of Theorem \ref{t2.2}[(1) and (2)] and the proof of  Theorem \ref{t2.2}(5), we have the following corollary.

\begin{cor}
\label{c2.2} Let $(G, *)$ be a group, $H \subset G$  be a subgroup
of $G$, and  $L = a*H$ for some $a \in G \setminus H$. Let $n \geq
3$. Then $(L, *)$  is an $n$-closed subset of $G$  if and only if
$a*H = H*a$ and $a^{n-1} \in H$.
\end{cor}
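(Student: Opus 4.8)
The plan is to obtain both directions directly from Theorem~\ref{t2.2}, which is precisely why the statement is phrased as a corollary rather than proved from scratch. For the forward implication, assume $(L,*)$ is an $n$-closed subset of $G$. Since $n \geq 3$, in particular $n \geq 2$, so Theorem~\ref{t2.2} applies with this value of $n$: part~(1) yields $a^{n-1} \in H$ and part~(2) yields $a*H = H*a$. Nothing further is required.

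For the converse, assume $a*H = H*a$ and $a^{n-1} \in H$, and let $d_1, \dots, d_n \in L$; the goal is to show $d_1 * \cdots * d_n \in L$. Write $d_i = a*h_i$ with $h_i \in H$. The idea is to use the single fact that $g*a \in a*H$ for every $g \in H$ (one half of the hypothesis $a*H = H*a$) to slide every $H$-factor to the right-hand end of the product; a straightforward induction on the number of factors then gives $d_1 * \cdots * d_n = a^{n} * h$ for some $h \in H$. Finally $a^n * h = a*(a^{n-1}*h)$ and $a^{n-1}*h \in H$ by hypothesis, so $d_1 * \cdots * d_n \in a*H = L$, which shows $(L,*)$ is $n$-closed. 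This is exactly the computation that already appears in the proof of Theorem~\ref{t2.2}(5), now carried out with $m$ replaced by $n$.

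I do not expect a genuine obstacle here. The only point that deserves a word of justification is the inductive ``collecting'' identity $d_1 * \cdots * d_n = a^{n} * h$: whenever $g \in H$ one has $g*a = a*g'$ for some $g' \in H$ because $H*a \subseteq a*H$, and since $H$ is closed under $*$ the accumulated factor remains in $H$ at each step. Granting this routine fact, the corollary follows immediately from Theorem~\ref{t2.2}.
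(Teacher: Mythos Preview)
Your proposal is correct and follows precisely the route the paper indicates: the forward direction quotes Theorem~\ref{t2.2}(1) and (2), and the converse reproduces the computation from the proof of Theorem~\ref{t2.2}(5) with $m$ replaced by $n$. The only difference is that you spell out the inductive ``collecting'' step $d_1*\cdots*d_n = a^n*h$ in more detail than the paper does.
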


The proof of the following lemma is similar to the proof of the
well-known fact: Let $(G, *)$ be a group and $a \in G$ such that
$\mid  a \mid  = n < \infty$, then $\mid a^m \mid = n/gcd(m, n)$ for
every integer $m > 0$. Hence we omit the proof.

\begin{lem}
\label{l2.1} Let $(G, *)$ be a group, $H \subset G$  be a subgroup
of $G$, and  $a \in G \setminus H$. Suppose that $a^n \in H$  for
some  integer $n \geq 2$, and let $k \geq 2$ be the least integer
such that $a^k \in H$. Then for each  $m \geq 1$, we have $c =
k/gcd(m, k)$ is the least positive integer such that $(a^m)^c \in
H$. Furthermore, $(a^m)^f \in H$  for some integer $f \geq 1$ if and
only if $c \mid f$.
\end{lem}

\begin{thm}
\label{t2.3} Let $(G, *)$ be a group, $H \subset G$  be a subgroup
of $G$, and $L = a*H$ for some $a \in G \setminus H$. Suppose that
$(L, *)$ is an $n$-closed subset of $G$ for some integer $n \geq 3$,
and let $k \geq 3 $ be the least integer such that $(L, *)$ is a
$k$-closed subset of $G$. For each integer $m \geq 1$, let $c =
(k-1)/gcd(m, k-1)$. Then $a^m*H$ is a $c + 1$-closed subset of $G$.
Furthermore, $a^m*H$ is an $f$-closed subset of $G$ for some integer
$f \geq 1$ if and only if $f = bc + 1$ for some integer $b \geq 1$.
\end{thm}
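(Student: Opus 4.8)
The plan is to translate $n$-closedness of the coset $a^{m}*H$ into a membership statement about powers of $a^{m}$ in $H$, and then read the answer off Lemma~\ref{l2.1}.

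First I would record the two facts that drive everything. By Theorem~\ref{t2.2}(1) we have $a^{k-1}\in H$, and by Theorem~\ref{t2.2}(4) a positive integer $j$ satisfies $a^{j}\in H$ exactly when $(k-1)\mid j$; in particular $k-1$ is the least positive integer $j$ with $a^{j}\in H$. Since $k\geq 3$, i.e.\ $k-1\geq 2$, this puts me in position to apply Lemma~\ref{l2.1} to the element $a$ with the ``$k$'' of that lemma being $k-1$ here: for each $m\geq 1$, writing $c=(k-1)/\gcd(m,k-1)$, the integer $c$ is the least positive integer with $(a^{m})^{c}\in H$, and $(a^{m})^{f}\in H$ iff $c\mid f$. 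I would also note that $a*H=H*a$ (Theorem~\ref{t2.2}(2)) propagates by a one-line induction to $a^{m}*H=H*a^{m}$ for every $m\geq 1$.

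Next I would split on whether $a^{m}\in H$. If $a^{m}\in H$, then $(k-1)\mid m$, so $\gcd(m,k-1)=k-1$ and $c=1$; also $a^{m}*H=H$ is a subgroup, hence an $f$-closed subset of $G$ for every $f\geq 2$, and $\{bc+1 : b\geq 1\}=\{f : f\geq 2\}$, so both assertions hold with $c+1=2$. If $a^{m}\notin H$, then $c\geq 2$, and since $a^{m}\in G\setminus H$ satisfies $a^{m}*H=H*a^{m}$, Corollary~\ref{c2.2} applies to the coset $a^{m}*H$: for every integer $f\geq 3$ it is $f$-closed iff $(a^{m})^{f-1}\in H$, iff $c\mid(f-1)$ by Lemma~\ref{l2.1}, iff $f=bc+1$ for some integer $b\geq 1$ (with $f\geq 3$ and $c\geq 2$, $c\mid(f-1)$ forces $b\geq 1$, and conversely $f=bc+1$ with $b\geq 1$ forces $f\geq c+1\geq 3$). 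Finally $a^{m}*H$ is not $2$-closed, since $a^{m}\in a^{m}*H$ would force $(a^{m})^{2}=a^{m}*h$ for some $h\in H$, i.e.\ $a^{m}\in H$; and $f=2$ is not of the form $bc+1$, $b\geq 1$, because $c\geq 2$. Taking $b=1$ gives that $a^{m}*H$ is $(c+1)$-closed, finishing this case, and hence the theorem.

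I do not expect a serious obstacle; the content is really careful bookkeeping. The one point to get right is that the relevant ``period'' of $a$ modulo $H$ is $k-1$ rather than $k$ — this is exactly what lets Lemma~\ref{l2.1} be quoted cleanly — together with the degenerate case $a^{m}\in H$, where $c=1$ and Corollary~\ref{c2.2} does not apply, so it must be checked by hand so that the uniform statement ``$f=bc+1$, $b\geq 1$'' still covers it, including the fact that a coset different from $H$ is never $2$-closed. A minor wrinkle is that ``$f$-closed'' is only defined for $f\geq 2$ in the paper, which makes the ``$f\geq 1$'' in the statement harmless, since $bc+1\geq 2$ always.
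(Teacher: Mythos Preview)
Your argument is correct and follows essentially the same route as the paper: use Theorem~\ref{t2.2}(2) to get $a^{m}*H=H*a^{m}$, invoke Lemma~\ref{l2.1} (with the period $k-1$) to identify $c$ as the least positive integer with $(a^{m})^{c}\in H$, and then read off the closedness statements via Corollary~\ref{c2.2} and Theorem~\ref{t2.2}(5). The one genuine difference is that you explicitly separate the degenerate case $a^{m}\in H$ (where $c=1$ and $a^{m}*H=H$), whereas the paper applies Corollary~\ref{c2.2} and Theorem~\ref{t2.2}(5) uniformly even though both are stated only for coset representatives outside $H$; your case split is therefore a small but legitimate improvement in rigor rather than a different method.
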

\begin{proof}
Let $m \geq 1$, $K = a^m*H$,  and  $c = (k-1)/gcd(m, k-1)$. Since $L
= a*H$ is an $n$-closed subset of $G$ for some $n \geq 3$, we have
$a*H = H*a$ by Theorem \ref{t2.2} and thus $K = a^m*H = H*a^m$.
Since $k-1$ is the smallest integer such that $a^{k-1} \in H$, we
have $c = (k-1)/gcd(m, k-1)$ is the smallest integer such that
$(a^m)^c \in H$ by Lemma \ref{l2.1}. Hence $K = a^m*H$ is a
$c+1$-closed subset of $G$ by Corollary \ref{c2.2}. Thus $K = a^m*H$
is an $f$-closed subset of $G$ for some integer $f \geq 2$ if and
only $f = bc + 1$ by Theorem \ref{t2.2}(5)
\end{proof}

The following is a trivial example of $n$-closed sets.
\begin{exa}
Let $(G, *)$ be a group with at least two elements, and let $a$ be a
non-identity element of $G$. Suppose that $\mid a \mid = k < \infty$
for some integer $k \geq 2$. Then $\{a\}$ ia an $m$-closed subset of
$G$  for some $m \geq 3$ if and only if $m = bk + 1$ for some
integer $b \geq 1$.
\end{exa}

It is possible to have a group $(G, *)$ and a left coset $L$  of a
subgroup $H$ of $G$ such that for some integer $n \geq 2$,  $a^n \in
H$ and $a^{n+1} \in L$ for every $a \in L$, but yet $L$ is not an
$m$-closed subset of $G$ for every integer $m \geq 2$. We have the
following example.

\begin{exa}
Let $G = S_3$ be the permutation group on $3$ elements. Then  $H =
\{(1), (1 \ \ 2)\}$ is a subgroup of $G$, and $L = (1 \ \ 3) o H =
\{(1 \ \ 3), (1 \ \ 2 \ \ 3)\}$ is a left coset of $H$. Then $a^6
\in H$ and $a^7 \in L$ for every $a \in L$. Since $L = (1 \ \ 3) o H
= \{(1 \ \ 3), (1 \ \ 2 \ \ 3)\} \not = H o (1 \ \ 3)$, $L$ is not
an $m$-closed subset of $G$ for every integer $m \geq 2$ by
Corollary \ref{c2.2}.
\end{exa}

It is possible to have a group $(G, *)$ and a subgroup $H$ of $G$
such that for each $n \geq 3$, there is a left coset of $H$ that is
an $n$-closed subset of $G$, but it is not an $m$-closed subset for
each integer $m$, $2 \leq m < n$.  We have the following example.

\begin{exa}
Let $G = (\mathbb{Q}, +)$. Then $H = (\mathbb{Z}, +)$ is a subgroup
of $G$. Let $n \geq 3$. Then $L = \frac{1}{n-1} + \mathbb{Z}$ is a
left cost of $H$ that is an $n$-closed subset of $G$, but it is not
an $m$-closed subset for each integer $m$, $2 \leq m < n$.
\end{exa}

It is possible to have a subgroup $H$ of a group $G$ and a left
coset $L = a*H$ for some $a \in G\setminus H$ such that $a*H = H*a$,
but $L$ is not an $n$-closed subset of $G$ for every $n \geq 2$. We
have the following example.

\begin{exa}
Let $G = (\mathbb{R}, +)$. Then $H = (\mathbb{Z}, +)$ is a subgroup
of $G$,  $L = \sqrt{2} + \mathbb{Z}$ is a left coset of $H$, and
$\sqrt{2} + \mathbb{Z} = \mathbb{Z} + \sqrt{2}$, but $(L, +)$ is not
an $n$-closed subset of $G$ for each integer $n \geq 2$.
\end{exa}

\section{A characterization of normal subgroups}

In view of Corollary \ref{c2.2}, we have the following
characterization of normal subgroups via $n$-closed subsets.
\begin{thm}
\label{t3.1}

Let $(G, *)$ be a group and  $H \subset G$  be a subgroup of $G$.
The following statements are equivalent:

{\bf (1)}. $H$ is a normal subgroup of $G$ and for each $a \in G\setminus H$, there is an integer $n \geq 2$ such that $a^n \in H$.

{\bf (2)}.  For each $a \in G\setminus H$, there is an integer $m \geq 3$ such that $a*H$ is an $m$-closed subset of $G$.

\end{thm}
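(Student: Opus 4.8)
The plan is to prove the equivalence by appealing directly to Corollary \ref{c2.2}, which already packages the hard analytic content: for a subgroup $H \subset G$ and a coset $L = a*H$ with $a \in G \setminus H$, and for $n \geq 3$, the coset $L$ is an $n$-closed subset of $G$ if and only if $a*H = H*a$ and $a^{n-1} \in H$. So the whole theorem reduces to translating ``$H$ is normal'' into the condition ``$a*H = H*a$ for all $a$'', and translating the existence of $m$ with $a*H$ being $m$-closed into the existence of $n$ with $a^n \in H$, using the shift $m = n+1$.

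For the direction $(1) \Rightarrow (2)$, I would fix $a \in G \setminus H$. Normality of $H$ gives $a*H = H*a$. By hypothesis there is an integer $n \geq 2$ with $a^n \in H$. Set $m = n+1 \geq 3$. Since $a*H = H*a$ and $a^{m-1} = a^n \in H$, Corollary \ref{c2.2} tells us $a*H$ is an $m$-closed subset of $G$, which is exactly what (2) demands. (One small point to note: Corollary \ref{c2.2} is stated for $n \geq 3$; here $m \geq 3$, so it applies directly, with no edge case to worry about.)

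For the direction $(2) \Rightarrow (1)$, I would again fix $a \in G \setminus H$. By (2) there is $m \geq 3$ with $L = a*H$ an $m$-closed subset of $G$. Corollary \ref{c2.2} then yields both $a*H = H*a$ and $a^{m-1} \in H$; taking $n = m - 1 \geq 2$ gives an integer $n \geq 2$ with $a^n \in H$, establishing the second half of (1). It remains to deduce that $H$ is normal, i.e. that $g*H = H*g$ for \emph{every} $g \in G$, not just for $g \in G \setminus H$. But if $g \in H$ then $g*H = H = H*g$ trivially, and if $g \in G \setminus H$ then $g*H = H*g$ by what we just showed; hence $g*H = H*g$ for all $g \in G$, so $H$ is normal.

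I do not expect a genuine obstacle here, since the substantive work is done in Section 2. The only things to be careful about are bookkeeping: the index shift between ``$m$-closed'' (with $m \geq 3$) and ``$a^n \in H$'' (with $n = m-1 \geq 2$), the fact that Corollary \ref{c2.2} requires its exponent parameter to be at least $3$ (satisfied since $m \geq 3$), and remembering to handle $g \in H$ separately when concluding normality in $(2) \Rightarrow (1)$. If one wanted a cleaner statement one could also remark that the condition ``for each $a \in G \setminus H$ there is $n \geq 2$ with $a^n \in H$'' is automatic when $[G:H] < \infty$, recovering the abstract's claim as a special case, but that is a side remark rather than part of the proof of Theorem \ref{t3.1} itself.
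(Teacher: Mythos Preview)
Your proposal is correct and follows exactly the approach the paper intends: the paper does not even write out a proof of Theorem~\ref{t3.1}, simply prefacing the statement with ``In view of Corollary~\ref{c2.2}\dots'', and your argument is precisely the routine unpacking of that corollary in both directions, including the minor bookkeeping (the shift $m=n+1$ and the trivial case $g\in H$ for normality) that the paper leaves implicit.
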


\begin{thm}
\label{t3.2} Let $(G, *)$ be a group and  $H \subset G$  be a
subgroup of $G$. Suppose that $[H : G] = n < \infty$. Then $H$ is a
normal subgroup of $G$ if and only if $a*H$ is an $n+1$-closed
subset of $G$ for each $a \in G\setminus H$.
\end{thm}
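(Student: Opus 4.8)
The plan is to deduce Theorem \ref{t3.2} from the general characterization in Corollary \ref{c2.2}, using the fact that a subgroup of index $n$ forces every element to satisfy $a^{n!} \in H$ (or, more sharply, that in the coset setting the relevant power is bounded by $n$).

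First I would prove the forward direction. Assume $H$ is normal and $[H:G] = n < \infty$. Fix $a \in G \setminus H$. Since $H$ is normal, $a*H = H*a$, so by Corollary \ref{c2.2} it suffices to show $a^{n} \in H$, i.e. that $a*H$ has order dividing $n$ in the quotient group $G/H$. But $G/H$ is a group of order $n$, so by Lagrange's theorem the order of the element $a*H$ divides $n$; writing $n = q \cdot \mathrm{ord}(a*H)$ we get $(a*H)^{n} = H$, that is, $a^{n} \in H$. Then Corollary \ref{c2.2} applies directly with the exponent $n+1-1 = n$: since $a*H = H*a$ and $a^{n} \in H$, the coset $L = a*H$ is an $(n+1)$-closed subset of $G$. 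This handles \textbf{(1)} $\Rightarrow$ \textbf{(2)} for every $a \in G \setminus H$.

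For the converse, assume that $a*H$ is an $(n+1)$-closed subset of $G$ for each $a \in G \setminus H$. Fix such an $a$. Since $n+1 \geq 3$ and $[H:G] = n < \infty$ forces $H \neq G$, Corollary \ref{c2.2} immediately gives $a*H = H*a$ (and also $a^{n} \in H$, though only the commuting relation is needed here). Thus every left coset of $H$ equals the corresponding right coset, which is precisely the statement that $H$ is normal in $G$. One small point to check: Corollary \ref{c2.2} is stated for $a \in G \setminus H$, which is exactly the range we are quantifying over, and for $a \in H$ the coset $a*H = H = H*a$ trivially, so normality holds on the nose.

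The main obstacle — really the only nontrivial ingredient beyond invoking Corollary \ref{c2.2} — is the forward direction's use of the quotient group: one must know that $G/H$ is well-defined (needs normality, which we have) and that it has order $n$, so that Lagrange's theorem yields $a^{n} \in H$. Without the finite-index hypothesis this power need not exist, which is why $[H:G] = n < \infty$ is essential; the number $n+1$ in the statement is then forced to be exactly one more than the index, matching the exponent $a^{n} \in H$ that Corollary \ref{c2.2} requires. I would also remark that the converse does not need the finiteness of the index at all — it only uses $n+1 \geq 3$ — but stating the theorem with $[H:G] = n < \infty$ keeps the exponent $n+1$ meaningful and symmetric.
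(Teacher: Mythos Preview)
Your proof is correct and follows exactly the paper's approach: both directions are obtained from Corollary~\ref{c2.2}, with the forward direction using that $G/H$ has order $n$ so $a^n \in H$ by Lagrange. The paper's written proof is terser---it only spells out the forward direction and leaves the converse implicit---but your argument and the paper's are the same in substance.
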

\begin{proof}
Suppose that $H$ is a normal subgroup of $G$. Since $G/H$ is a group
with exactly $n$ distinct elements, we have $a^n \in H$ for each $a
\in G\setminus H$. Thus we are done by Corollary \ref{c2.2}.
\end{proof}

\begin{cor}
Let $(G, *)$ be a finite group,  $H \subset G$  be a subgroup of
$G$, and $n = [H : G]$.  Then $H$ is a normal subgroup of $G$ if and
only if $a*H$ is an $n+1$-closed subset of $G$ for each $a \in
G\setminus H$.
\end{cor}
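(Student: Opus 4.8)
The plan is to obtain this corollary as an immediate specialization of Theorem \ref{t3.2}. The only ingredient needed beyond Theorem \ref{t3.2} is the observation that, when $G$ is finite, the index $n = [H:G]$ is automatically a finite positive integer --- indeed $n = |G|/|H|$ by Lagrange's theorem --- so the standing hypothesis $[H:G] = n < \infty$ of Theorem \ref{t3.2} holds for free. Moreover, since $H \subset G$ is a proper subgroup we have $n \geq 2$, hence $n+1 \geq 3$, which is the range in which Corollary \ref{c2.2} applies.

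With these remarks in place I would simply invoke Theorem \ref{t3.2}: it states precisely that, for a subgroup $H$ of finite index $n$ in $G$, $H$ is normal in $G$ if and only if $a*H$ is an $n+1$-closed subset of $G$ for every $a \in G \setminus H$. Since here $n = [H:G] < \infty$, this is exactly the asserted equivalence, and no further argument is required.

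For completeness, I would recall how the two directions pass through Corollary \ref{c2.2}. If $H$ is normal then $G/H$ is a group of order $n$, so $a^n \in H$ for every $a \in G$; together with $a*H = H*a$ and $n+1 \geq 3$, Corollary \ref{c2.2} yields that $a*H$ is $n+1$-closed. Conversely, if $a*H$ is $n+1$-closed for each $a \in G \setminus H$, then $n+1 \geq 3$ lets Corollary \ref{c2.2} force $a*H = H*a$ for each such $a$ (while $a*H = H*a = H$ trivially for $a \in H$), so $H$ is normal.

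I do not anticipate any real obstacle here: all the substance lives in Theorem \ref{t3.2} and the underlying Corollary \ref{c2.2}. The single point demanding a moment of care is the bookkeeping on the index --- verifying that $H$ being a proper subgroup of the \emph{finite} group $G$ guarantees $2 \leq n < \infty$, so that the hypotheses of the cited results are met --- and this is nothing more than Lagrange's theorem.
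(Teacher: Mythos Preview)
Your proposal is correct and matches the paper's approach: the corollary is stated in the paper without proof, being an immediate specialization of Theorem~\ref{t3.2} once one notes that finiteness of $G$ forces $[H:G] < \infty$. Your additional unpacking via Corollary~\ref{c2.2} is accurate but more than the paper itself supplies.
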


\begin {thebibliography}{999}

\bibitem T. W. Hungerford, {\it Algebra}, Springer-verlag (1987).

\end{thebibliography}
\end{document}